\documentclass[11pt,reqno]{amsart}
\usepackage[T1]{fontenc}
\usepackage[utf8]{inputenc}
\usepackage[a4paper,margin=2.8cm]{geometry}
\usepackage{amsmath,amssymb,amsfonts,amsthm,mathtools}
\usepackage{enumitem}
\usepackage{microtype}
\usepackage{xcolor}
\usepackage[hidelinks,bookmarksdepth=3]{hyperref}
\usepackage[nameinlink,noabbrev]{cleveref}
\numberwithin{equation}{section}
\theoremstyle{plain}
\newtheorem{theorem}{Theorem}[section]
\newtheorem{lemma}[theorem]{Lemma}
\newtheorem{proposition}[theorem]{Proposition}

\theoremstyle{definition}

\newtheorem{remark}[theorem]{Remark}
\newtheorem{question}[theorem]{Question}
\crefname{theorem}{Theorem}{Theorems}
\crefname{lemma}{Lemma}{Lemmas}
\crefname{proposition}{Proposition}{Propositions}
\crefname{corollary}{Corollary}{Corollaries}
\crefname{remark}{Remark}{Remarks}
\crefname{question}{Question}{Questions}

\title[Counting almost independent sets in regular graphs]
{Counting almost independent sets in regular graphs}
\author[G. Carenini]{Gaia Carenini}
\address{Trinity College Cambridge, Department of Pure Mathematics and Mathematical Statistics, Centre for Mathematical Sciences, Wilberforce Road, Cambridge CB3 0WA, United Kingdom.}
\email{gc645@cam.ac.uk}
\date{\today}

\begin{document}

\begin{abstract}
Kahn proved that, among bipartite $d$-regular graphs on $n$ vertices, the number of independent sets is maximized by a disjoint union of copies of $K_{d,d}$. Zhao later extended this result to all $d$-regular graphs. We prove a robust version of this theorem in which independent sets are replaced by sets spanning few internal edges. If $G$ is $d$-regular on $n$ vertices, then the number of subsets spanning at most $\gamma dn$ edges is at most
$$
2^{n/2}\exp\left\{
O\bigl(\gamma\log(e/\gamma)n\bigr)
+
O\bigl(n/d\bigr)
\right\}.
$$
Both correction terms are sharp up to absolute constants: the $\gamma\log(1/\gamma)n$ term is necessary when $d$ is sufficiently large in terms of $\gamma$, while the $n/d$ term is already necessary for independent sets. Our result answers a question of Seth.
\end{abstract}

\maketitle
\section{Introduction}

How many subsets of a regular graph can be independent? This question has a well-known history. Motivated by the problem of counting sum-free sets, Granville conjectured that every $n$-vertex $d$-regular graph has at most $2^{(1/2+o(1))n}$ independent sets as $d\to\infty$. This was proved by Alon \cite{Alon}, and later sharpened to an exact extremal theorem through the work of Kahn \cite{Kahn} and Zhao \cite{Zhao}. Kahn proved the sharp bound for bipartite $d$-regular graphs: if $G$ is bipartite and $d$-regular on $n$ vertices, then the number of independent sets of $G$ is at most $(2^{d+1}-1)^{n/(2d)}$. Zhao subsequently removed the bipartiteness assumption, proving that the same bound holds for every $d$-regular graph. Thus, whenever $2d\mid n$, the number of independent sets is maximized by a disjoint union of copies of $K_{d,d}$.

A related counting problem was studied by Seth \cite{Seth} in the context of tolerant independent-set testing. He proved that, for $k\ge c\log^9 n$, the number of subsets $J\subseteq V(G)$ satisfying
$\frac{e_G(J)}{\binom{|J|}{2}}\le \frac{d}{kn}
$
is at most
$$
2^{\frac n2\left(
1+O\!\left(\frac{\log^3n}{d}\right)
 +O\!\left(\frac{\log^3n}{k^{1/3}}\right)
\right)}.
$$
Seth further asked for the correct order of this bound, and whether the disjoint union of copies of $K_{d,d}$ is extremal.

In this paper, we address the quantitative part of this question by proving a robust form of the Kahn--Zhao theorem. Instead of counting only independent sets, we count all sets spanning at most $\gamma dn$ edges, for a parameter $\gamma\ge0$ interpolating between the independent-set count ($\gamma=0$) and increasingly sparse induced subgraphs. We show that the extremal $2^{n/2}$ behaviour persists for small $\gamma$, at the cost of two correction terms whose orders we determine up to absolute constants.

We shall use the following notation. If $G$ is a graph and $A\subseteq V(G)$, write $e_G(A)=e(G[A])$. For a $d$-regular graph $G$ on $n$ vertices and $\gamma\ge0$, set
$$
i_\gamma(G):=\bigl|\{A\subseteq V(G):e_G(A)\le\gamma dn\}\bigr|.
$$
Thus $i_0(G)=i(G)$, the number of independent sets. We can now state our main theorem.

\begin{theorem}\label{thm:main}
There is an absolute constant $C>0$ such that the following holds. Let $G$ be a $d$-regular graph on $n$ vertices, with $d\ge2$, and let $0<\gamma\le1/2$. Then
$$
i_\gamma(G)
\le
2^{n/2}
\exp\left\{
C\gamma\log(e/\gamma)n
+
C\frac nd
\right\}.
$$
\end{theorem}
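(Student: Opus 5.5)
We reduce to the Kahn--Zhao bound for independent sets by deleting a small set of vertices. Let $A$ satisfy $e_G(A)\le\gamma dn$. Then $G[A]$ has at most $2\gamma n$ vertices of degree at least $d$ inside $A$. More usefully, $G[A]$ has a vertex cover of controlled size. We will \emph{not} use a greedy vertex cover, since that could have size $\gamma dn$. Instead, we encode $A$ as a pair $(S, A\setminus S)$, where $A\setminus S$ is independent in $G-S$ and $S$ lies in a family of small sets that can be enumerated cheaply.

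The encoding comes from a two-step cleaning procedure. Fix a threshold $t=\lceil \gamma d\rceil$ (at least $1$). Let $S_1\subseteq A$ be the vertices whose degree in $G[A]$ exceeds $t$. Since $\sum_{v\in A}\deg_A(v)=2e_G(A)\le 2\gamma dn$, we get $|S_1|\le 2\gamma dn/t\le 2n$. That is useless, so the threshold must be larger. Take $t=\gamma' d$ with $\gamma'=\sqrt{\gamma}$ or similar. Removing $S_1$ leaves $A'=A\setminus S_1$ with $\Delta(G[A'])\le t$. For such a set, we count differently. Apply the entropy/Zhao bipartite double cover argument directly to sets inducing a subgraph of maximum degree at most $t$. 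Kahn's entropy proof on the bipartite double cover $G\times K_2$ gives, for each vertex $v$, a local term counting the pairs $(A\cap N(v)\text{ on one side}, \text{on the other})$. With degree-bounded induced subgraphs, the number of local configurations on $N(v)$ is at most $\sum_{j\le t}\binom dj\cdot(\ldots)$ instead of $1$. This yields a per-vertex factor of roughly $(2^{d+1}+2^d\binom{d}{\le t})^{1/(2d)}$, i.e.\ a factor of $2^{1/2}\exp(O(H(t/d)))$ per vertex, where $H$ is binary entropy.

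The cleanest plan is therefore to prove a Kahn-type entropy bound for the bipartite case first. We will apply Shearer's lemma to a uniformly random admissible $A$ in a bipartite $d$-regular $G$ with sides $L,R$. This gives
$H(A)\le \sum_{v\in R}\bigl(\tfrac1d H(A\cap N(v)) + H(A_v\mid A\cap N(v))\bigr)$.
We then split according to whether $A\cap N(v)$ is empty, and bound the cost of $v\in A$ while $N(v)\cap A\neq\emptyset$ by the fraction of edges inside $A$. Let $p_v=\Pr[A\cap N(v)\ne\emptyset, v\in A]$. By linearity, $\sum_v p_v\le \mathbb E\,\#\{v\in A:\deg_A v\ge1\}\le 2e_G(A)\le 2\gamma dn$; we refine this to vertices of degree at least $1$ weighted by degree to get $O(\gamma n)$ effective defect after dividing appropriately. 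This is where we will be careful: we will weight by $\deg_A(v)/d$ and use concavity of $H$, which yields the $\gamma\log(e/\gamma)n$ term, while the standard Kahn optimisation yields the $n/d$ term (since $\log_2(2^{d+1}-1)/(2d)=1/2+O(1/d)$).

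For general $G$ we will follow Zhao's reduction: $i_\gamma(G)^2\le$ the corresponding count in $G\times K_2$, via an injection from pairs of subsets to subsets of the double cover. Here we must check that the edge-count condition is preserved up to a constant factor. Each edge of $G$ gives two edges in the cover, and the Zhao swapping bijection does not increase the count badly. If that fails, we will instead use $A\times\{0,1\}$-type embeddings with $\gamma$ replaced by $2\gamma$, which is harmless. The main obstacle is the defect step in Shearer's inequality, namely converting the global constraint $e_G(A)\le\gamma dn$ into per-vertex entropy losses totalling $O(\gamma\log(e/\gamma)n)$ via concavity.
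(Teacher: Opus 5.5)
Your plan has genuine gaps at the two points where the robust statement differs from Kahn--Zhao. The first is the ``defect'' step. You flag it as the main obstacle but do not resolve it, and the mechanism you sketch cannot work: no $O(\gamma n)$ effective defect comes from counting vertices of $A$ with a neighbour in $A$. In a disjoint union of copies of $K_{d,d}$, take one full side and a single vertex of the other side in every block. Then $e_G(A)=n/2\le\gamma dn$ as soon as $\gamma d\ge1/2$, yet about half of all vertices lie in $A$ and have a neighbour in $A$. Correspondingly, in Kahn's inequality $H(X_v\mid X_{N(v)}=S)$ can be as large as $\log2$ when $|S|$ is small. This is compensated only by the smallness of $\frac1dH(X_{N(v)})$, and that trade-off between the two Shearer terms is not captured by concavity.

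The missing idea is a Lagrange multiplier (exponential tilt) that localizes the global constraint. Add $\frac td\bigl(\gamma dn-\mathbb E\,e_G(A)\bigr)\ge0$ and write $\mathbb E\,e_G(A)=\sum_{v\in R}\sum_S\Pr[X_{N(v)}=S]\Pr[X_v=1\mid X_{N(v)}=S]\,|S|$. Since $\max_p\{h(p)-\frac td p|S|\}=\log(1+e^{-t|S|/d})$, the Gibbs variational principle bounds each vertex term by $\frac1d\log B_d(t)$. This gives $\log i_\gamma(G)\le\gamma tn+\frac n{2d}\log B_d(t)$ for bipartite $G$, which is exactly what the paper gets from $\mathbf 1[e_G(A)\le\gamma dn]\le e^{\gamma tn}e^{-te_G(A)/d}$.

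Even then you are not done. Both error terms come from the one-dimensional estimate $B_d(t)\le2^d\exp\{O(dte^{-t/2})+O(1)\}$ with $t=2\log(e/\gamma)$. That estimate needs a multi-range analysis of $\sum_a\binom da(1+e^{-ta/d})^d$, and your proposal has no counterpart to it; ``the standard Kahn optimisation'' only covers $\gamma=0$. Your opening deletion route cannot replace this either. With threshold $\sqrt\gamma\,d$, merely encoding $S_1$ costs $\binom{n}{\le2\sqrt\gamma n}=\exp\{\Theta(\sqrt\gamma\log(1/\gamma)n)\}$, which is of the wrong order.

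The second gap is the non-bipartite reduction, which does not transfer to hard edge budgets. Zhao's injection uses the fact that the cross edges between two \emph{independent} sets form a bipartite graph between $A\setminus B$ and $B\setminus A$. Swapping on a canonically chosen side of each component then removes all cross edges and is invertible. If $A,B$ span edges, a swap turns internal edges across the cut into cross edges and vice versa, and swap-based injections with the same budget need not exist. For example, if a triangle of $G$ has each vertex in exactly one of $A,B$, the $6$ non-constant assignments give one internal edge. In $G\times K_2$, only the $2$ constant assignments give at most one cross edge. Your fallback ``replace $\gamma$ by $2\gamma$'' is unjustified. The plain embedding $(A,B)\mapsto A\times\{0\}\cup B\times\{1\}$ can create $\Theta(dn)$ cross edges even for independent $A,B$.

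Here too the fix is to pass to the weighted quantity $Z_G(1,q)$ with $q=e^{-t/d}\le1$. For it, $Z_G(1,q)^2\le Z_{G\times K_2}(1,q)$ does hold. Within each swap class, edges at vertices lying in both or neither set contribute equally to the two counts. The inequality then reduces to the fact that, on the graph spanned by vertices in exactly one of the two sets, the ferromagnetic Ising partition function dominates the antiferromagnetic one (high-temperature expansion). The paper sidesteps both of your steps by citing Sah--Sawhney--Stoner--Zhao for $Z_G(1,q)\le Z_{K_{d,d}}(1,q)^{n/(2d)}$ for all $d$-regular $G$, so that only the estimate for $B_d(t)$ remains.
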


The error terms in \Cref{thm:main} are unavoidable, up to absolute constants.

\begin{proposition}\label{prop:sharpness}
There are absolute constants $c,c_0>0$ such that the following holds. Suppose that $0<\gamma\le c_0$, $d\ge2/\gamma$, and $2d\mid n$. If $G$ is the disjoint union of $n/(2d)$ copies of $K_{d,d}$, then
$$
i_\gamma(G)
\ge
2^{n/2}
\exp\left\{
c\gamma\log(e/\gamma)n
+
c\frac nd
\right\}.
$$
Moreover, for every $d\ge2$ and $2d\mid n$,
$$
i_0(G)
=
(2^{d+1}-1)^{n/(2d)}
\ge
2^{n/2}\exp\left\{c\frac nd\right\}.
$$
\end{proposition}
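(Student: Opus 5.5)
The proof works separately with the two correction terms. Since $i_\gamma(G)\ge i_0(G)$, it suffices to show two things: that $i_0(G)\ge 2^{n/2}e^{c_1 n/d}$, and that $i_\gamma(G)\ge 2^{n/2}e^{c_2\gamma\log(e/\gamma)n}$ under the stated hypotheses. We then use $\max(x,y)\ge (x+y)/2$ in the exponent, which gives the first claim with $c=\tfrac12\min(c_1,c_2)$.

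For the independent-set statement, first note that every independent set of $G$ meets each copy of $K_{d,d}$ either in a subset of one side or in the empty set. Hence each copy contributes $2\cdot 2^d-1$ choices, and $i_0(G)=(2^{d+1}-1)^{n/(2d)}$ because the copies are disjoint. Next, write $(2^{d+1}-1)^{1/(2d)}=2^{1/2}\,2^{1/(2d)}(1-2^{-d-1})^{1/(2d)}$. Using $d\ge2$ gives $1-2^{-d-1}\ge 7/8$, so
\[
(2^{d+1}-1)^{n/(2d)}\ \ge\ 2^{n/2}\exp\Bigl\{\tfrac{n}{2d}\log\tfrac{7}{4}\Bigr\}.
\]
This proves the second claim with $c_1=\tfrac12\log(7/4)>0$.

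For the $\gamma\log(e/\gamma)n$ term, the plan is an explicit family of sets. In each copy, with sides $L$ and $R$, we take an arbitrary subset of $L$ together with a subset $T\subseteq R$ of size exactly $m:=\lfloor 2\gamma d\rfloor$. Such a set spans at most $md\le 2\gamma d^2$ edges inside the copy. Summing over the $n/(2d)$ copies, the whole set spans at most $\gamma dn$ edges. The number of such sets is
\[
\Bigl(2^d\binom{d}{m}\Bigr)^{n/(2d)}.
\]
Because $d\ge 2/\gamma$, we have $2\gamma d\ge 4$, so $m\ge 2\gamma d-1\ge \gamma d$; also $m\le 2\gamma d\le d$ since $\gamma\le c_0\le 1/2$. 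Hence
\[
\binom{d}{m}\ge (d/m)^m\ge (2\gamma)^{-\gamma d}.
\]
Raising to the power $n/(2d)$ gives $i_\gamma(G)\ge 2^{n/2}\exp\{\tfrac{\gamma n}{2}\log\tfrac{1}{2\gamma}\}$.

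Finally, we choose $c_0$ small enough that $\log(1/(2\gamma))\ge\tfrac12\log(e/\gamma)$ for all $\gamma\le c_0$. This holds when $\gamma\le 1/(4e)$, since then $\gamma^{-1/2}\ge 2e^{1/2}$. This yields $c_2=1/4$.

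The only delicate point is arranging that $m$ is comparable to $\gamma d$ from both sides. The upper bound $m\le 2\gamma d$ keeps the edge budget, and the lower bound $m\ge\gamma d$ is what makes $\binom dm$ large. This is exactly where the hypothesis $d\ge 2/\gamma$ is used. Everything else is direct computation.
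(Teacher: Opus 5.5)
Your proof is correct, and the constants check out ($c_0=1/(4e)$, $c_1=\tfrac12\log(7/4)$, $c_2=1/4$, $c=\tfrac12\min\{c_1,c_2\}$). The underlying construction matches the paper's: one side is chosen freely and the other contains about $\gamma d$ vertices. The difference is in how you obtain the two correction terms.

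The paper uses a single family in each block: all sets in which \emph{at least one} side contains at most $k=\lfloor\gamma d\rfloor$ selected vertices. Inclusion--exclusion gives $2^{d+1}S_k-S_k^2$ such sets, where $S_k=\sum_{j\le k}\binom dj$. The bound $S_k\le 2^{d-1}$ (valid because $k<d/2$) then gives at least $\tfrac32\,2^dS_k$ sets. The factor $\tfrac32$ comes from the freedom to choose which side is sparse and yields the $n/d$ term. The factor $S_k$ yields the entropy term. So one product bound delivers both corrections at once.

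You decouple the two terms instead. The $n/d$ term is inherited from $i_\gamma(G)\ge i_0(G)$ and the exact independent-set count, so the second assertion does double duty. The entropy term comes from a one-orientation family with exactly $\lfloor 2\gamma d\rfloor$ vertices on the sparse side. You then merge the two bounds with $\max\{x,y\}\ge(x+y)/2$.

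Your route avoids the inclusion--exclusion count and the estimate on $S_k$, and it uses the full edge budget $\gamma dn$; the paper's family spans only at most $\gamma dn/2$ edges. The cost is a factor $\tfrac12$ in $c$ from the averaging step. The paper's version instead shows a single explicit family whose size carries both corrections multiplicatively.
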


Thus the $\gamma\log(1/\gamma)n$ term has the correct order once $\gamma d$ is bounded below, while the $n/d$ correction is already forced at $\gamma=0$.

Seth's density condition implies
$$
e_G(J)
\le
\frac{d}{kn}\binom{|J|}{2}
\le
\frac{dn}{2k}.
$$
Thus, taking $\gamma=1/(2k)$ in \Cref{thm:main} gives
$$
2^{n/2}
\exp\left\{
O\!\left(\frac{\log k}{k}n\right)
+
O\!\left(\frac nd\right)
\right\},
$$
matching, up to absolute constants, the dependence exhibited by Seth's $K_{d,d}$ construction. In this sense, \Cref{thm:main} answers the quantitative part of Seth's question. A different sparse-induced-subgraph counting problem was studied by Nenadov \cite{Nenadov}, for locally dense host graphs under a maximum-degree condition on the induced subgraph.

The proof of \Cref{thm:main} reduces to a single idea. Rather than count sparse sets directly, we penalize each induced edge by a factor $q<1$, turning $i_\gamma(G)$ into an exponential moment of the partition function of an antiferromagnetic two-spin model. A theorem of Sah, Sawhney, Stoner and Zhao \cite{SSSZ} shows that among $d$-regular graphs, every such partition function is maximized by $K_{d,d}$. This reduces the graph-theoretic problem to estimating a single weighted block: the partition function of the same model on one copy of $K_{d,d}$, where choosing $a$ vertices on one side and $b$ on the other is penalized by $q^{ab}$ for spanning $ab$ edges. The reduction works for an arbitrary vertex fugacity $\lambda$, and we record that weighted form in \Cref{prop:biclique-reduction} below.

The exact extremal problem remains natural.

\begin{question}\label{q:exact}
Suppose that $2d\mid n$. For every $\gamma\ge0$, is $i_\gamma(G)$ maximized, among $d$-regular graphs on $n$ vertices, by the disjoint union of $n/(2d)$ copies of $K_{d,d}$?
\end{question}

\section{Proofs}

All logarithms are natural and all implicit constants are absolute. For a $d$-regular graph $G$ on $n$ vertices, $0<q\le1$ and $\lambda>0$, define
$$
Z_G(\lambda,q)
=
\sum_{A\subseteq V(G)} \lambda^{|A|}q^{e_G(A)},
\qquad\text{so that}\qquad
Z_{K_{d,d}}(\lambda,q)
=
\sum_{a,b=0}^d
\binom da\binom db
\lambda^{a+b}q^{ab}.
$$

\begin{proposition}[Biclique reduction]\label{prop:biclique-reduction}
Let $G$ be a $d$-regular graph on $n$ vertices, let $\gamma\ge0$, and let $\lambda>0$. Then, for every $t\ge0$,
$$
\sum_{\substack{A\subseteq V(G)\\ e_G(A)\le\gamma dn}}
\lambda^{|A|}
\le
e^{\gamma tn}
Z_{K_{d,d}}\!\left(\lambda,e^{-t/d}\right)^{n/(2d)}.
$$
\end{proposition}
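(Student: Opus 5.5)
The plan is an exponential (Chernoff-type) penalty followed by the Sah--Sawhney--Stoner--Zhao extremal theorem for antiferromagnetic two-spin models.

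\emph{Step 1: Markov-type penalty.} Fix $t\ge0$ and set $q=e^{-t/d}\in(0,1]$. For every $A$ with $e_G(A)\le\gamma dn$ we have $t\bigl(\gamma n-e_G(A)/d\bigr)\ge0$, so
$$
1\le e^{\gamma tn}\,q^{e_G(A)}.
$$
Multiply by $\lambda^{|A|}$ and sum over these $A$. Then extend the sum to all $A\subseteq V(G)$, which is allowed because every term is nonnegative. This gives
$$
\sum_{\substack{A\subseteq V(G)\\ e_G(A)\le\gamma dn}}\lambda^{|A|}\le e^{\gamma tn}\,Z_G(\lambda,q).
$$

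\emph{Step 2: identify the model.} $Z_G(\lambda,q)$ is the partition function of a two-spin model on $G$ with edge interaction matrix
$$
\begin{pmatrix}1&1\\1&q\end{pmatrix}
$$
and vertex weights $(1,\lambda)$. Since $q\le1$, its determinant is $q-1\le0$, so the model is antiferromagnetic (in the degenerate case $q=1$ it is trivial). This is the hard-core-type setting covered by the theorem of Sah, Sawhney, Stoner and Zhao \cite{SSSZ}. Their theorem states that for every $d$-regular graph $G$,
$$
Z_G(\lambda,q)^{1/n}\le Z_{K_{d,d}}(\lambda,q)^{1/(2d)}.
$$

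\emph{Step 3: conclude.} Raising to the power $n$ gives $Z_G\le Z_{K_{d,d}}^{n/(2d)}$. Combined with Step 1, this is exactly the claimed inequality. The case $q=1$, i.e.\ $t=0$, is also fine directly: $Z_G(\lambda,1)=(1+\lambda)^n=Z_{K_{d,d}}(\lambda,1)^{n/(2d)}$.

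\emph{Main obstacle.} The only delicate point is checking that the hypotheses of \cite{SSSZ} really cover this family for all $\lambda>0$ and $0<q\le1$. Their result covers antiferromagnetic 2-spin systems on regular graphs (no isolated vertices is automatic since $d\ge1$), and the version needed here has arbitrary external field, with $K_{d,d}$ as the extremal graph. I expect this to hold. If some normalization were required, I would first rescale the spins to bring the interaction matrix to their normal form $\begin{pmatrix}A&1\\1&B\end{pmatrix}$ with $AB\le1$, absorbing the rescaling into the fugacity, before invoking their theorem.
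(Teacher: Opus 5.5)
Your proposal is correct and takes the same route as the paper. You set $q=e^{-t/d}$, use $q^{e_G(A)}\ge e^{-\gamma tn}$ on the constrained sets and extend the sum to $Z_G(\lambda,q)$, and then identify $Z_G(\lambda,q)$ as the partition function of the antiferromagnetic two-spin model with interaction matrix $\begin{pmatrix}1&1\\1&q\end{pmatrix}$, so that the Sah--Sawhney--Stoner--Zhao biclique-maximization result (cited by the paper as their Corollary~1.14) gives $Z_G\le Z_{K_{d,d}}^{n/(2d)}$; the paper merely bounds $Z_G$ first and applies the penalty afterward, and your normalization concern does not arise because the matrix is already of the form $\begin{pmatrix}A&1\\1&B\end{pmatrix}$ with $AB=q\le1$.
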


\begin{proof}
We first bound $Z_G(\lambda,q)$ itself for $0<q\le1$. Consider the two-spin model with spins $\{0,1\}$, vertex weights $w(0)=1$, $w(1)=\lambda$, and edge-interaction matrix
$$
W=
\begin{pmatrix}
1 & 1\\
1 & q
\end{pmatrix}.
$$
For a spin assignment $\sigma:V(G)\to\{0,1\}$, the contribution is
$$
\prod_{v\in V(G)} w(\sigma(v))
\prod_{uv\in E(G)} W(\sigma(u),\sigma(v)).
$$
If $A=\{v\in V(G):\sigma(v)=1\}$, this contribution is exactly $\lambda^{|A|}q^{e_G(A)}$. Consequently, the partition function of this two-spin model on $G$ is $Z_G(\lambda,q)$. The model is antiferromagnetic, since
$$
W(0,0)W(1,1)=q\le1=W(0,1)W(1,0).
$$
Corollary~1.14 of Sah, Sawhney, Stoner and Zhao \cite{SSSZ} states that every antiferromagnetic two-spin model is biclique-maximizing. Applied to a $d$-regular graph $G$, this gives
$$
Z_G(\lambda,q)
\le
Z_{K_{d,d}}(\lambda,q)^{n/(2d)},
$$
where, in $K_{d,d}$, a set choosing $a$ vertices on one side and $b$ vertices on the other has weight $\lambda^{a+b}q^{ab}$, and there are $\binom da\binom db$ such choices, giving the formula for $Z_{K_{d,d}}(\lambda,q)$ above.

Now put $q=e^{-t/d}$. If $e_G(A)\le\gamma dn$, then
$q^{e_G(A)}\ge q^{\gamma dn}=e^{-\gamma tn}$.
Hence
$$
\sum_{\substack{A\subseteq V(G)\\ e_G(A)\le\gamma dn}}
\lambda^{|A|}
\le
e^{\gamma tn}
\sum_{A\subseteq V(G)}
\lambda^{|A|}q^{e_G(A)}
=
e^{\gamma tn}Z_G(\lambda,q)
\le
e^{\gamma tn}
Z_{K_{d,d}}\!\left(\lambda,e^{-t/d}\right)^{n/(2d)}.
\qedhere
$$
\end{proof}

\begin{remark}
The fugacity $\lambda$ can be used to extract bounds for almost-independent sets of a prescribed size. For example, if $N_m$ denotes the number of sets $A$ with $|A|=m$ and $e_G(A)\le\gamma dn$, then \Cref{prop:biclique-reduction} gives, for every $\lambda>0$ and $t\ge0$,
$$
N_m
\le
\lambda^{-m}e^{\gamma tn}
Z_{K_{d,d}}\!\left(\lambda,e^{-t/d}\right)^{n/(2d)}.
$$
We only use $\lambda=1$ below.
\end{remark}

With $\lambda=1$, \Cref{prop:biclique-reduction} reduces the problem to
$$
B_d(t):=\sum_{a,b=0}^{d}\binom da\binom db e^{-tab/d}.
$$
Roughly speaking, $B_d(t)$ weighs every pair of subsets $(A,B)$ of the two sides of $K_{d,d}$ by $e^{-t\cdot(e(A,B))/d}$ where $e(A,B)$ denotes the number of edges between $A$ and $B$, so that configurations spanning many edges are exponentially suppressed; the parameter $t$ controls how strongly cross-edges are penalized, with larger $\gamma$ (a looser edge budget) corresponding to smaller $t$. In $K_{d,d}$, choosing $a$ and $b$ vertices on the two sides spans $ab$ edges. Taking $t=2\log(e/\gamma)$ balances entropy against the edge penalty. The estimate below shows that a significant contribution to $B_d(t)$ comes only when one side is chosen almost freely and the other contains a small sprinkling of vertices. This gives the main factor $2^{n/2}$, the entropy correction $\exp\{O(\gamma\log(e/\gamma)n)\}$, and the finite-degree correction $\exp\{O(n/d)\}$.

\begin{lemma}\label{lem:block-estimate}
There is an absolute constant $C_0>0$ such that, for all $d\ge2$ and $t\ge1$,
$$
B_d(t)
=
\sum_{a,b=0}^{d}\binom da\binom db e^{-tab/d}
\le
2^d
\exp\left\{
C_0dt e^{-t/2}
+
C_0
\right\}.
$$
\end{lemma}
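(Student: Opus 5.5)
The plan is to sum out one side exactly and then compare the resulting one-variable sum with $2^d$. For fixed $a$, the binomial theorem gives $\sum_{b}\binom db e^{-tab/d}=(1+e^{-ta/d})^d$. Hence
$$
B_d(t)=\sum_{a=0}^d\binom da\bigl(1+e^{-ta/d}\bigr)^d
=2^d\sum_{a=0}^d\binom da\Bigl(\tfrac{1+e^{-ta/d}}{2}\Bigr)^d .
$$
Two reductions come first. If $t\le T_0$ for a suitable absolute constant $T_0$, then $dte^{-t/2}\ge c\,d$. So the trivial bound $B_d(t)\le 4^d$ already suffices once $C_0$ is large. We may therefore assume $t\ge T_0$. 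Since $B_d(t)$ is symmetric in $a,b$, it is also enough to bound the terms with $a\le b$. For the remaining terms with $a\ge d/2$ we use $(1+e^{-ta/d})^d\le \exp(de^{-t/2})$. Summing against $\binom da$ gives at most $2^d\exp(de^{-t/2})$, which is acceptable.

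For $a<d/2$ I would split at $\alpha_0:=\tfrac12-\tfrac{\log t}{t}$, writing $\alpha=a/d$.

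\emph{Range $\alpha\in[\alpha_0,\tfrac12)$.} Here $e^{-t\alpha}\le te^{-t/2}$, so the factor is at most $\exp(dte^{-t/2})$. The binomial coefficients again sum to at most $2^d$.

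\emph{Range $\alpha<\alpha_0$.} This is where entropy must be played against the penalty. I would write the $a$-th normalised term as $\exp\{d\,\varphi(\alpha)\}$, where
$$
\varphi(\alpha)=\frac1d\log\Bigl(\binom da2^{-d}\Bigr)+\log\frac{1+e^{-t\alpha}}{2}
\le \alpha\log\frac e\alpha-\frac{1-e^{-t\alpha}}{2}.
$$
For $\alpha\ge 1/t$ and $\alpha\le c$ this is negative and bounded away from $0$. For $\alpha\in[c,\alpha_0]$ I would instead use the Chernoff bound $\binom da2^{-d}\le e^{-2d(1/2-\alpha)^2}$ together with $(1+e^{-t\alpha})/2\le \tfrac12\exp(e^{-t\alpha})\cdot 2/2$, i.e. $\log\frac{1+e^{-t\alpha}}2\le e^{-t\alpha}$. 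Since $t\ge T_0$, the quadratic term beats $e^{-t\alpha}$ except within $O(\log t/t)$ of $\tfrac12$, and that neighbourhood is exactly the range already handled. For $\alpha<1/t$ the linearisation $\log\frac{1+e^{-t\alpha}}{2}\le -t\alpha/2+O(t^2\alpha^2)$ gives $\varphi(\alpha)\lesssim \alpha\log(e/\alpha)-t\alpha/4$. Its maximum is $O(e^{-t/4})$, attained near $\alpha\approx e^{-t/4}$. This is too weak as it stands; to get $e^{-t/2}$ one must keep the exact constant, with maximiser $\alpha\approx e^{-t/2}$ and maximum $\asymp e^{-t/2}$, which is where the $dte^{-t/2}$ term originates.

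The main obstacle is that bounding a sum of $d+1$ terms by $(d+1)$ times the maximum loses a factor $d$. That loss is not affordable, since the lemma allows only $e^{C_0}$. So the sum must not be handled by its maximum. In the small-$a$ range I would compare directly with the binomial theorem. Using $\binom da\le d^a/a!$ and $(1+e^{-t\alpha})/2\le\exp(-\tfrac{1}{2}(1-e^{-t\alpha}))$ together with $1-e^{-t\alpha}\ge t\alpha(1-t\alpha/2)$, the terms are dominated by $(de^{-t/2+O(t^2a/d)})^a/a!$. Their sum is at most $\exp(de^{-t/2}e^{O(1)})$, Poisson-style, for $a\le d/t$. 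In the middle ranges I would show that the ratio of consecutive terms is at most $1/2$, so those ranges contribute a geometric series bounded by a constant times their first term. Getting these ratio estimates uniformly is the step I expect to need the most care.
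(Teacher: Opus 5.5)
The gap is in how you sum over the range $c\le\alpha\le\alpha_0$, and the consecutive-ratio bound you propose there is false.

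The parts before that are correct. These are the reduction to $\sum_a\binom da(1+e^{-ta/d})^d$, the trivial bounds for $t\le T_0$ and for $a\ge d/2$, and the window $\alpha\in[\alpha_0,\tfrac12)$. That window is a neat way to avoid any Gaussian estimate at the peak while paying exactly the allowed $dte^{-t/2}$.

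Your pointwise bound on $[c,\alpha_0]$ is fine. Each normalised term is at most $\exp\{-2dy^2+de^{-t\alpha}\}\le e^{-dy^2}$, where $y=\tfrac12-\alpha\ge\log t/t$. The trouble is that there are $\Theta(d)$ such terms. Take $d\approx t^2/\log^2 t$: then $\sum_a e^{-(d/2-a)^2/d}$ over this range is of order $\sqrt d\approx t/\log t$. In that regime the lemma allows only a factor $e^{O(1)}$, since $dte^{-t/2}=o(1)$. Your planned fix is a ratio of consecutive terms at most $1/2$, but this fails here. Writing $T_a=\binom da(1+e^{-ta/d})^d$,
\[
\frac{T_{a-1}}{T_a}
=\frac{a}{d-a+1}\Bigl(\frac{1+e^{-t(a-1)/d}}{1+e^{-ta/d}}\Bigr)^{d}
\ge\frac{a}{d-a+1}\approx 1-4y .
\]
This exceeds $1/2$ for all $a\gtrsim d/3$ and is $1-\Theta(\log t/t)$ at $\alpha_0$. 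So next to the binomial peak there is no geometric decay with a fixed ratio. No argument that bounds the terms one at a time can recover the missing $\sqrt d$.

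What is missing is an estimate that uses the fact that $2^{-d}\binom da$ is a probability distribution. The paper gets the factor $1/\sqrt d$ from the local de Moivre--Laplace bound $\binom da\le\frac{C2^d}{\sqrt d}e^{-2(d/2-a)^2/d}$ for $a/d\in[\tfrac14,\tfrac12]$. Combined with $e^{-t\alpha}\le y^2$, each term is then at most $\frac{C2^d}{\sqrt d}e^{-(d/2-a)^2/d}$, and the Gaussian sum is $O(2^d)$.

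Alternatively, let $X\sim\mathrm{Bin}(d,\tfrac12)$ and $g$ be a standard Gaussian. Hoeffding's lemma together with $e^{\lambda s^2}=\mathbb{E}_g e^{\sqrt{2\lambda}\,g s}$ gives $\mathbb{E}\,e^{(X-d/2)^2/d}\le\sqrt2$. This bounds the whole range $[c,\alpha_0]$ at once.

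There is also a smaller slip in your Poisson step. For $a\le d/t$ the correction $t^2a/d$ in the exponent can be as large as $t$. Your domination therefore only gives terms $(de^{-t/4})^a/a!$, which is the bound you had just rejected. The $e^{O(1)}$ claim holds only for $a\le d/t^2$. On $d/t^2\le a\le d/t$ you need a separate estimate. For instance, each term there is at most $\bigl(\tfrac{ed}{a}e^{-t/4}\bigr)^a\le(et^2e^{-t/4})^a\le 2^{-a}$ once $t\ge T_0$.
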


\begin{proof}
For $1\le t\le20$ the estimate follows from the trivial bound
$B_d(t)\le4^d,
$
after increasing $C_0$, since $t e^{-t/2}$ is bounded away from zero on this interval. We may therefore assume that $t\ge20$. Let
$r=e^{-t/2}$.
First summing over $b$ gives
$$
B_d(t)=\sum_{a=0}^{d}\binom da(1+e^{-ta/d})^d.
$$
Although the original expression is symmetric in $a$ and $b$, this asymmetric form, that we obtained by explicitly evaluating the geometric-type sum over $b$, turns the problem into a one-dimensional entropy estimate. We bound this sum by splitting the range of $a/d$ into scales and applying a different estimate on each range, the standard \emph{divide-and-conquer} approach for binomially weighted exponential sums. We shall repeatedly use the standard Chernoff-type entropy tail bound for binomial coefficients
\begin{equation}\label{eq:entropy-tail}
\sum_{a\le pd}\binom da\le \exp\{dh(p)+C\}
\qquad (0\le p\le1/2)
\end{equation}
and the following de~Moivre--Laplace-type approximation:
\begin{equation}\label{eq:local-binomial}
\binom da
\le
\frac{C}{\sqrt{1+dx(1-x)}}\exp\{dh(x)\},
\qquad x=a/d,
\end{equation}
with an absolute constant $C$, and $h(x):=-x\log x-(1-x)\log(1-x)$, with $0\log0=0$. We also use
\begin{equation}\label{eq:entropy-basic}
h(x)\le x\log(e/x),
\qquad
h(x)\le \log2-2(x-1/2)^2.
\end{equation}

First consider $a/d\le4r$. Since $(1+e^{-ta/d})^d\le2^d$, the entropy tail bound \eqref{eq:entropy-tail} and the bound $h(4r)\le Ctr$ show that this range contributes at most $2^d\exp\{Cdtr+C\}$. Next consider $a/d\ge1/2$. In this range
$(1+e^{-ta/d})^d\le \exp\{dr\}$, 
and hence the contribution is at most
$2^d\exp\{dr\}$. It remains to estimate the middle range $4r<a/d<1/2$. Let $x=a/d$ and
$$
\Phi_t(x):=h(x)+\log(1+e^{-tx})-\log2.
$$

For $4r\le x\le2/t$, let $u=tx$. Using a second-order Taylor expansion of $\log(1+e^{-u})$ around $u=0$, together with \eqref{eq:entropy-basic}, namely
$\log(1+e^{-u})\le \log2-\frac u2+\frac{u^2}{8}$,
we have
$$
\Phi_t(x)
\le
x\left(\log\frac{et}{u}-\frac t2+\frac{tu}8\right).
$$
The expression in parentheses is convex as a function of $u$ on $[4tr,2]$, so its maximum on this interval is attained at an endpoint. At the endpoints it is at most an absolute negative constant, since
$$
\log\frac{e}{4r}-\frac t2+\frac{t(4tr)}8
=
1-\log4+\frac{t^2r}{2}<0
$$
and
$$
\log\frac{et}{2}-\frac t2+\frac t4
=
1+\log(t/2)-\frac t4<0
$$
for $t\ge20$. Thus there is an absolute constant $c>0$ such that
$$
\Phi_t(x)\le-cx
\qquad(4r\le x\le2/t).
$$
Using the local binomial approximation \eqref{eq:local-binomial}, and noting that
$d x(1-x)\ge a/2$ here, the contribution from $4r<a/d\le2/t$ is at most
$$
C2^d
\sum_{a\ge0}
\frac{e^{-ca}}{\sqrt{1+a}}
\le
C2^d,
$$
a convergent geometric-type tail sum.

Now consider
$\frac2t\le x\le\frac12-\frac2t
$
and write $y=1/2-x$. Then
$$
\frac2t\le y\le\frac12-\frac2t.
$$
The function $e^{-t/2+ty}/y^2$ is increasing on this interval, and therefore
$$
e^{-tx}=e^{-t/2+ty}\le y^2.
$$
Using \eqref{eq:entropy-basic} and $\log(1+e^{-tx})\le e^{-tx}$ gives
$$
\Phi_t(x)\le-y^2.
$$

We split this range once more. If $2/t\le x\le1/4$, then $y\ge1/4$, and the crude bound $\binom da\le e^{dh(x)}$ gives
$$
\binom da(1+e^{-tx})^d
\le
2^d e^{-d/16}.
$$
Summing over at most $d+1$ values of $a$, this part contributes $O(2^d)$.

If instead
$$
\frac14\le x\le\frac12-\frac2t,
$$
then $x(1-x)\ge3/16$. This is a local-CLT range: the local estimate \eqref{eq:local-binomial} yields a genuinely Gaussian-shaped bound,
$$
\binom da(1+e^{-tx})^d
\le
\frac{C2^d}{\sqrt d}
\exp\left\{-\frac{(d/2-a)^2}{d}\right\},
$$
and summing this Gaussian profile over $a$, a standard $\sum_a e^{-a^2/d}=O(\sqrt d)$ estimate, shows that this part contributes at most
$
C2^d.
$

Finally, if $1/2-2/t\le x\le1/2$ (the range nearest the peak, again treated by the local-CLT approximation), then
$e^{-tx}\le e^2r
$
and hence
$$
\Phi_t(x)
\le
-2(x-1/2)^2+e^2r.
$$
Since $x\ge1/2-2/t\ge2/5$ for $t\ge20$, \eqref{eq:local-binomial} gives
$$
\binom da(1+e^{-tx})^d
\le
\frac{C2^d}{\sqrt d}
\exp\{Ce^2dr\}
\exp\left\{-\frac{2(d/2-a)^2}{d}\right\},
$$
and the same Gaussian-sum estimate as above shows that this range contributes at most
$C2^d\exp\{Cdr\}$.

Combining the estimates for all ranges, and absorbing the leading absolute constants into the exponential, proves the lemma.
\end{proof}

\begin{proof}[Proof of \Cref{thm:main}]
Set
$t=2\log(e/\gamma)$.
Since $0<\gamma\le1/2$, we have $t\ge1$. Applying \Cref{prop:biclique-reduction} with $\lambda=1$ gives
\begin{equation}\label{eq:reduction-main}
i_\gamma(G)
\le
e^{\gamma tn}
B_d(t)^{n/(2d)}.
\end{equation}
Now
$$
t e^{-t/2}
=
2\log(e/\gamma)\cdot\frac{\gamma}{e}
=
O\bigl(\gamma\log(e/\gamma)\bigr).
$$
By Lemma \ref{lem:block-estimate},
$$
B_d(t)
\le
2^d
\exp\left\{
Cd\gamma\log(e/\gamma)+C
\right\}.
$$
Substituting this into \eqref{eq:reduction-main} gives
$$
i_\gamma(G)
\le
2^{n/2}
\exp\left\{
C\gamma\log(e/\gamma)n
+
C\frac nd
\right\},
$$
after increasing $C$ if necessary.
\end{proof}

\begin{proof}[Proof of Proposition \ref{prop:sharpness}]
Let
$m=\frac{n}{2d}
$
be the number of $K_{d,d}$ blocks.

First suppose that $0<\gamma\le c_0$, where $c_0>0$ is a sufficiently small absolute constant, and that $d\ge2/\gamma$. Set
$k=\lfloor\gamma d\rfloor,
\qquad
S_k=\sum_{j=0}^{k}\binom dj$.
In each copy of $K_{d,d}$, consider all vertex sets for which at least one of the two sides contains at most $k$ selected vertices. Every such set spans at most $kd\le\gamma d^2$ edges. Therefore an arbitrary choice from this family in every block spans at most
$m\gamma d^2
=
\frac{\gamma dn}{2}
\le
\gamma dn$
edges in total.

The number of allowed sets in one block is
$$
2^{d+1}S_k-S_k^2
=
2^dS_k\left(2-\frac{S_k}{2^d}\right).
$$
Taking $c_0\le1/4$, we have $k<d/2$, and hence $S_k\le2^{d-1}$. Thus the one-block count is at least
$\frac32\,2^dS_k$.
It follows that
$$
i_\gamma(G)
\ge
\left(\frac32\,2^dS_k\right)^m
=
2^{n/2}
\exp\left\{
\frac{n}{2d}
\left(
\log\frac32+\log S_k
\right)
\right\}.
$$

Since $\gamma d\ge2$, we have $k\ge\gamma d/2$. Moreover,
$$
S_k\ge\binom dk
\ge
\left(\frac dk\right)^k.
$$
As $k\le\gamma d$, this gives
$$
\log S_k
\ge
k\log\frac dk
\ge
\frac{\gamma d}{2}\log\frac1\gamma.
$$
After decreasing $c_0$ if necessary,
$$
\log\frac1\gamma
\ge
\frac12\log\frac e\gamma.
$$
Hence
$$
\frac{n}{2d}\log S_k
\ge
c\gamma\log(e/\gamma)n,
$$
while
$$
\frac{n}{2d}\log\frac32
\ge
c\frac nd.
$$
This proves the first assertion.

For $\gamma=0$, each $K_{d,d}$ has exactly $2^{d+1}-1$ independent sets, and therefore
$$
i_0(G)
=
(2^{d+1}-1)^m
=
2^{n/2}
\exp\left\{
\frac{n}{2d}\log(2-2^{-d})
\right\}.
$$
Since $d\ge2$, the last logarithm is bounded below by a positive absolute constant, which proves the second assertion.
\end{proof}
\subsection*{Use of generative AI tools}
Generative AI tools were not used to generate the mathematical results or underlying ideas. They were used solely for language editing, including correcting grammar and spelling and improving readability; for identifying potentially relevant literature, including the article by Nenadov \cite{Nenadov}; and for standardizing the bibliography entries.

\subsection*{Acknowledgments}
The author thanks Cameron Seth for several discussions on counting almost independent sets in graphs and hypergraphs. The author is also grateful to her supervisor, Imre Leader, for his support and guidance. The author is supported by the CB European PhD Studentship funded by Trinity College, Cambridge.

\end{document}